\theoremstyle{plain}
\newtheorem{thm}{Theorem}
\newtheorem{lem}[thm]{Lemma}
\newtheorem{cor}[thm]{Corollary}
\theoremstyle{definition}
\newtheorem{rmk}[thm]{Remark}
\numberwithin{thm}{section} \numberwithin{equation}{section}
\newcommand{\ga}[2]{\begin{gather}\label{#1}#2 \end{gather}}
\newcommand{\sE}{{\mathcal E}}
\newcommand{\sF}{{\mathcal F}}
\newcommand{\sL}{{\mathcal L}}
\newcommand{\sO}{{\mathcal O}}
\begin{document}

\title{Stability of sheaves of locally closed and exact forms }
\author{Xiaotao Sun}
\address{Academy of Mathematics and Systems Science, Chinese Academy of Science, Beijing, P. R. of China}
\email{xsun@math.ac.cn}
\address{}
\date{April 23, 2009}
\thanks{Partially supported by NSFC (No. 10731030) and Key Laboratory of Mathematics
Mechanization (KLMM)}

\begin{abstract} For any smooth projective variety $X$ of dimension $n$ over an
algebraically closed field $k$ of characteristic $p>0$ with
$\mu(\Omega^1_X)>0$. If ${\rm T}^{\ell}(\Omega^1_X)$
($0<\ell<n(p-1)$) are semi-stable, then the sheaf $B^1_X$ of exact
$1$-forms is stable. When $X$ is a surface with $\mu(\Omega^1_X)>0$
and $\Omega^1_X$ is semi-stable, the sheaf $B^2_X$ of exact
$2$-forms is also stable. Moreover, under the same condition, the
sheaf $Z^1_X$ of closed $1$-forms is stable when $p>3$, and $Z^1_X$
is semi-stable when $p=3$.
\end{abstract}
\maketitle

\section{Introduction}

Let $X$ be a smooth projective variety over an algebraically closed
field $k$ with ${\rm char}(k)=p>0$. Let $F:X\to X_1:=X\times_kk$
denote the relative Frobenius morphism over $k$. In the de Rham
complex of $X$, the subsheaf $B^i_X={\rm image}\,
(d:F_*\Omega^{i-1}_X\to F_*\Omega^i_X)$ (resp. $Z^i_X={\rm kernel}\,
(d:F_*\Omega^i_X\to F_*\Omega^{i-1}_X)$) of $F_*\Omega^i_X$ is
called the sheaf of locally exact $i$-forms (resp. locally closed
$i$-forms). Fix an ample divisor ${\rm H}$ on $X$, the slope of a
torsion free sheaf $\sE$ is $\mu(\sE):=c_1(\sE)\cdot {\rm
H}^{n-1}/{\rm rk}(\sE)$, where ${\rm dim}(X)=n$ and ${\rm rk}(\sE)$
denotes the rank of $\sE$. A torsion free sheaf $\sE$ is called
semi-stable (resp. stable) if $\mu(\sE')\le\mu(\sE)$ (resp.
$\mu(\sE')<\mu(\sE)$) for any nontrivial proper sub-sheaf
$\sE'\subset\sE$. In this notes, we prove some observations about
stability of $B^i_X$ and $Z^i_X$.

When $X$ is a smooth projective curve of genus $g\ge 2$,  the
semi-stability of $B^1_X$ is proved in \cite{Ra}, and its stability
is proved in \cite{J}. When $X$ is a smooth projective surface with
semi-stable $\Omega^1_X$ and $\mu(\Omega^1_X)>0$, the semi-stability
of $B^1_X$ and $B^2_X$ is proved in \cite{Kit}. But it is not known
whether $Z^1_X$ is semi-stable (cf. \cite[Remark 3.4]{Kit}).

We show firstly that the sheaf $B^1_X$ of local exact differential
$1$-forms on $X$ is stable if $\mu(\Omega^1_X)>0$ and ${\rm
T}^{\ell}(\Omega^1_X)$ ($0<\ell<n(p-1)$) are semi-stable. For
surfaces, the semi-stability of $\Omega^1_X$ implies semi-stability
of ${\rm T}^{\ell}(\Omega^1_X)$ ($0<\ell<2(p-1)$). Thus our result
is a generalization of the results in \cite{J} and \cite{Kit}. Then
we show secondly that $B_X^2$ and $Z^1_X$ are stable when $X$ is a
smooth projective surface with semi-stable $\Omega^1_X$ and
$\mu(\Omega^1_X)>0$. It solves in particular the open problem in
\cite{Kit}.

\section{The stability of $B_X^1$}

Let $X$ be a smooth projective variety of dimension $n$. To study
the stability of $B_X^1$, we recall from \cite{Su} that there is a
filtration
 \ga{2.1}{0=V_{n(p-1)+1}\subset
V_{n(p-1)}\subset\cdots\subset V_1\subset V_0=V=F^*(F_*W)} with
injective homomorphisms $\nabla:V_{\ell}/V_{\ell+1}\to
V_{\ell-1}/V_{\ell}\otimes\Omega^1_X$ such that
$$\nabla^{\ell}:V_{\ell}/V_{\ell+1}\cong W\otimes{\rm
T}^{\ell}(\Omega^1_X)\,\,,\,\,\quad 0\le\ell\le n(p-1).$$

Let $\sE\subset F_*W$ be a nontrivial subsheaf, the canonical
filtration \eqref{2.1} induces the filtration (we assume $V_m\cap
F^*\sE\neq 0$) \ga{2.2}{0\subset V_m\cap
F^*\sE\subset\,\cdots\,\subset V_1\cap F^*\sE\subset V_0\cap
F^*\sE=F^*\sE.} Let
$$\sF_{\ell}:=\frac{V_{\ell}\cap F^*\sE}{V_{\ell+1}\cap
F^*\sE}\subset\frac{V_{\ell}}{V_{\ell+1}}, \qquad r_{\ell}={\rm
rk}(\sF_{\ell}).$$ Then $$\mu(F^*\sE)=\frac{1}{{\rm
rk}(\sE)}\sum_{\ell=0}^mr_{\ell}\cdot\mu(\sF_{\ell})$$ and
\ga{2.3}{\mu(\sE)-\mu(F_*W)=\frac{1}{p\cdot{\rm
rk}(\sE)}\sum^m_{\ell=0}r_{\ell}\left(\mu(\sF_{\ell})-\mu(F^*F_*W)\right).}
By using the following formula (cf. Lemma 4.2 and Lemma 4.3 in
\cite{Su}) \ga{2.4} {\mu(F^*F_*W)=p\cdot\mu(F_*W)=
\frac{p-1}{2}K_X\cdot{\rm
H}^{n-1}+\mu(W), \\
 \mu(V_{\ell}/V_{\ell+1})=\mu(W\otimes {\rm
T^{\ell}}(\Omega^1_X))=\frac{\ell}{n}K_X\cdot{\rm H}^{n-1}+\mu(W).
\notag} we can formulate the following lemma

\begin{lem}\label{lem2.1} The morphisms $\nabla:V_{\ell}/V_{\ell+1}\to
V_{\ell-1}/V_{\ell}\otimes\Omega^1_X$ induce injective morphisms
$\nabla: \sF_{\ell}\to \sF_{\ell-1}\otimes\Omega^1_X$. Moreover, we
have  \ga{2.5}{\mu(\sE)-\mu(F_*W)=
\sum^m_{\ell=0}r_{\ell}\frac{\mu(\sF_{\ell})
-\mu(\frac{V_{\ell}}{V_{\ell+1}})}{p\cdot{\rm rk}(\sE)}\\-
\notag\frac{\mu(\Omega^1_X)}{p\cdot{\rm
rk}(\sE)}\sum^m_{\ell=0}(\frac{n(p-1)}{2}-\ell)r_{\ell}}
\end{lem}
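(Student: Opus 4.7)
The lemma contains two assertions---an injectivity statement on graded pieces and the formula \eqref{2.5}---and I would prove them in turn.

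For the injectivity part, the crucial observation is that the canonical connection $\nabla\colon V\to V\otimes\Omega^1_X$ on $V=F^*F_*W$ preserves every subsheaf of the form $F^*\sE$ with $\sE\subset F_*W$. Indeed, $F^*\sE$ is locally generated by the horizontal sections $1\otimes e$ (for $e\in\sE$), so the Leibniz rule gives
\[
\nabla\Bigl(\sum a_i\cdot(1\otimes e_i)\Bigr)=\sum da_i\otimes(1\otimes e_i)\in F^*\sE\otimes\Omega^1_X.
\]
Combining this with $\nabla(V_\ell)\subset V_{\ell-1}\otimes\Omega^1_X$ from \eqref{2.1} and using that $\Omega^1_X$ is locally free, one obtains $\nabla(V_\ell\cap F^*\sE)\subset(V_{\ell-1}\cap F^*\sE)\otimes\Omega^1_X$. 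Passing to the subquotients produces the desired $\sO_X$-linear map $\sF_\ell\to\sF_{\ell-1}\otimes\Omega^1_X$. Injectivity is then immediate: a local section of $V_\ell\cap F^*\sE$ whose image vanishes in $\sF_{\ell-1}\otimes\Omega^1_X\subset V_{\ell-1}/V_\ell\otimes\Omega^1_X$ must, by injectivity of the original map from \eqref{2.1}, already lie in $V_{\ell+1}$, hence in $V_{\ell+1}\cap F^*\sE$.

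For the formula, the plan is to start from \eqref{2.3} and split
\[
\mu(\sF_\ell)-\mu(F^*F_*W)=\bigl(\mu(\sF_\ell)-\mu(V_\ell/V_{\ell+1})\bigr)+\bigl(\mu(V_\ell/V_{\ell+1})-\mu(F^*F_*W)\bigr).
\]
The first summand, when inserted into \eqref{2.3}, reproduces the first sum in \eqref{2.5}. For the second, I would invoke \eqref{2.4} together with the identity $K_X\cdot{\rm H}^{n-1}=n\,\mu(\Omega^1_X)$ (since $\det\Omega^1_X=\omega_X$ and $\rank\Omega^1_X=n$) to compute
\[
\mu(V_\ell/V_{\ell+1})-\mu(F^*F_*W)=\Bigl(\frac{\ell}{n}-\frac{p-1}{2}\Bigr)K_X\cdot{\rm H}^{n-1}=-\Bigl(\frac{n(p-1)}{2}-\ell\Bigr)\mu(\Omega^1_X).
\]
Substituting this back into \eqref{2.3} and factoring out $\mu(\Omega^1_X)/(p\cdot\rank\sE)$ yields \eqref{2.5}.

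The only non-routine ingredient is the $\nabla$-stability of $F^*\sE$, which is what allows the induced filtration \eqref{2.2} on $F^*\sE$ to inherit the cascade structure of \eqref{2.1}; everything else is formal bookkeeping with the filtration and Chern-class identities, so I do not anticipate any serious technical obstacle.
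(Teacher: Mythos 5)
Your proposal is correct and follows essentially the same route as the paper, which presents the lemma as a direct consequence of the displayed identities \eqref{2.3} and \eqref{2.4} (together with $K_X\cdot{\rm H}^{n-1}=n\,\mu(\Omega^1_X)$), exactly the splitting you carry out. The injectivity statement, which the paper leaves to the construction of the canonical filtration in \cite{Su}, is correctly supplied by your observation that the canonical connection preserves $F^*\sE$ and that vanishing of $\nabla(s)$ modulo $V_{\ell}\otimes\Omega^1_X$ forces $s\in V_{\ell+1}$.
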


When $m\le\frac{n(p-1)}{2}$, it is clear that \ga{2.6}
{\sum^m_{\ell=0}(\frac{n(p-1)}{2}-\ell)r_{\ell}\ge\frac{n(p-1)}{2}r_0\ge\frac{n(p-1)}{2}.}
When $m>\frac{n(p-1)}{2}$, we can write \ga{2.7}
{\sum_{\ell=0}^m(\frac{n(p-1)}{2}-\ell)r_{\ell}=\sum^{n(p-1)}_{\ell=m+1}
(\ell-\frac{n(p-1)}{2})
r_{n(p-1)-\ell}\\+\sum^m_{\ell\,>\frac{n(p-1)}{2}}(\ell-\frac{n(p-1)}{2})
(r_{n(p-1)-\ell}-r_{\ell}).\notag} The fact that
$V_{\ell}/V_{\ell+1}\xrightarrow{\nabla}(V_{\ell-1}/V_{\ell})\otimes\Omega^1_X$
induce injective morphisms
$\sF_{\ell}\xrightarrow{\nabla}\sF_{\ell-1}\otimes\Omega^1_X\quad
(1\le\ell\le m)$ implies the following inequalities
$$r_{n(p-1)-\ell}-r_{\ell}\ge 0 \qquad (\ell>\frac{n(p-1)}{2})$$
(cf. \cite[Proposition 4.7]{Su}). Thus we have \ga{2.8}
{\sum^m_{\ell=0}(\frac{n(p-1)}{2}-\ell)r_{\ell}\ge\frac{n(p-1)}{2}r_0\quad
{\rm when}\,\,\, m\neq n(p-1)\,.}

\begin{lem}\label{lem2.2} When $m=n(p-1)$, we have
 $$r_{\ell}\ge r_{n(p-1)}\cdot {\rm rk}({\rm
T}^{n(p-1)-\ell}(\Omega^1_X))$$ which implies the following
inequality
$${\rm rk}(\sE)=\sum^m_{\ell=0}r_{\ell}\ge
r_{n(p-1)}\sum^m_{\ell=0}{\rm rk}({\rm
T}^{n(p-1)-\ell}(\Omega^1_X))=r_{n(p-1)}\cdot p^n$$
\end{lem}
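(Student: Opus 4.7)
The plan is to exploit a stronger structural property of $F^*\sE$ than is used in Lemma~\ref{lem2.1}. Identifying $F^*F_*W\cong \sO_X\otimes_{\sO_{X_1}}F_*W$ locally via the Frobenius twist, the subsheaf $F^*\sE$ coincides with $\sO_X\otimes_{\sO_{X_1}}\sE$; hence it is preserved by every Hasse--Schmidt differential operator $\partial^{(\alpha)}$ ($\alpha\in\Z^n_{\geq 0}$) acting only on the first tensor factor, not merely by the first-order connection used in Lemma~\ref{lem2.1}.

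The first step is to verify that $\partial^{(\alpha)}$ shifts the PD-filtration, $\partial^{(\alpha)}(V_k)\subset V_{k-|\alpha|}$, and that the induced map on graded pieces $V_k/V_{k+1}\to V_{k-|\alpha|}/V_{k-|\alpha|+1}$ is $\sO_X$-linear (the nonlinear Leibniz terms land in strictly deeper filtration pieces and vanish modulo $V_{k-|\alpha|+1}$). Assembling the $\partial^{(\alpha)}$ with $|\alpha|=n(p-1)-\ell$ and $\alpha_i<p$ then yields an $\sO_X$-linear morphism
\[
\Phi\colon V_{n(p-1)}/V_{n(p-1)+1}\otimes {\rm T}^{n(p-1)-\ell}(\Omega^1_X)^{\vee}\longrightarrow V_\ell/V_{\ell+1}.
\]
A direct computation in local generators $\xi_i$ of the PD-ideal gives $\partial^{(\alpha)}\xi^{(p-1,\ldots,p-1)}=\pm\xi^{(p-1)-\alpha}$, so $\Phi$ is surjective; the symmetry ${\rm rk}({\rm T}^\ell(\Omega^1_X))={\rm rk}({\rm T}^{n(p-1)-\ell}(\Omega^1_X))$ then forces $\Phi$ to be an isomorphism.

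Since each $\partial^{(\alpha)}$ preserves $F^*\sE$, the isomorphism $\Phi$ restricts to an injection $\sF_{n(p-1)}\otimes {\rm T}^{n(p-1)-\ell}(\Omega^1_X)^{\vee}\hookrightarrow \sF_\ell$, yielding on ranks the asserted inequality $r_\ell\geq r_{n(p-1)}\cdot{\rm rk}({\rm T}^{n(p-1)-\ell}(\Omega^1_X))$. Summing over $\ell=0,\ldots,n(p-1)$ and using $\sum_\ell{\rm rk}({\rm T}^\ell(\Omega^1_X))=p^n$ gives the second inequality in the statement. The main obstacle I anticipate is globalizing $\Phi$: one must verify that the locally defined construction is independent of coordinates, which should follow from the intrinsic characterization of $V_\bullet$ as the PD-filtration of the counit map $F^*F_*W\to W$ together with the canonical nature of the Hasse derivatives with respect to this PD-structure.
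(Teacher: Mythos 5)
Your proof is correct and is essentially the paper's own argument: the paper likewise transports $\sF_{n(p-1)}$ down into $\sF_{n(p-1)-\ell}$ by applying the degree-$\ell$ part of the operator algebra ${\rm D}=K[\partial_{\alpha_1},\dots,\partial_{\alpha_n}]/(\partial_{\alpha_1}^p,\dots,\partial_{\alpha_n}^p)$ at the generic point, using that $R^{n(p-1)}$ is one-dimensional (so $\sF_{n(p-1)}=W'\otimes R^{n(p-1)}$) and that ${\rm D}_{\ell}\cdot R^{n(p-1)}=R^{n(p-1)-\ell}$. The differences are only in packaging: since every exponent involved is at most $p-1$, your Hasse--Schmidt operators coincide up to units with iterates of the first-order operators ${\rm D}_1$ that the paper gets from $\nabla$, and the globalization of $\Phi$ you flag as a concern is not needed, because the rank inequality is a generic statement and the paper simply reduces to the function field.
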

\begin{proof}
It is a local problem to prove the lemma. Let $K=K(X)$ be the
function field of $X$ and consider the $K$-algebra
$$R=\frac{K[\alpha_1,\cdots,\alpha_n]}{(\alpha_1^p,\cdots,\alpha_n^p)}=\bigoplus^{n(p-1)}_{\ell=0}R^{\ell},$$
where $R^{\ell}$ is the $K$-linear space generated by
$$\{\,\alpha_1^{k_1}\cdots \alpha_n^{k_n}\,|\,k_1+\cdots+k_n=\ell,\quad 0\le k_i\le
p-1\,\}.$$ The quotients in the filtration \eqref{2.1} can be
described locally
$$V_{\ell}/V_{\ell+1}=W\otimes_K R^{\ell}$$
as $K$-vector spaces. If $K=k(x_1,..., x_n)$, then the homomorphism
$$\nabla: W\otimes_K R^{\ell}\to W\otimes_KR^{\ell-1}\otimes_K\Omega^1_{K/k}$$
is locally the $k$-linear homomorphism (cf. (3.6) in \cite{Su})
defined by
$$\nabla(w\otimes\alpha_1^{k_1}\cdots\alpha_n^{k_n})=-w\otimes\sum^n_{i=1}k_i
(\alpha_1^{k_1}\cdots\alpha_i^{k_i-1}\cdots\alpha_n^{k_n})
\otimes_K{\rm d}x_i.$$ Then the fact that
$\sF_{\ell}\xrightarrow{\nabla}\sF_{\ell-1}\otimes \Omega^1_X$ for
$\sF_{\ell}\subset W\otimes R^{\ell}$ is equivalent to
\ga{2.9}{\forall\,\,\, \sum_jw_j\otimes f_j \in
\sF_{\ell}\,\,\Rightarrow\,\,\sum_jw_j\otimes\frac{\partial
f_j}{\partial\alpha_i}\, \in\,\sF_{\ell-1}\quad (1\le i\le n).}

The polynomial ring ${\rm P}=
K[\partial_{\alpha_1},\cdots,\partial_{\alpha_n}]$ acts on $R$
through partial derivations, which induces a ${\rm D}$-module
structure on $R$, where
$${\rm D}=\frac{K[\partial_{\alpha_1},\cdots,\partial_{\alpha_n}]}{(\partial_{\alpha_1}^p,\cdots,\partial_{\alpha_n}^p)}
=\bigoplus^{n(p-1)}_{\ell=0}{\rm D}_{\ell}$$ and ${\rm D}_{\ell}$ is
the linear space of degree $\ell$ homogeneous elements. In
particular, $W\otimes R$ has the induced ${\rm D}$-module structure
with ${\rm D}$ acts on $W$ trivially. Use this notation, \eqref{2.9}
is equivalent to ${\rm D}_1\cdot \sF_{\ell}\subset \sF_{\ell-1}$.

Since $R^{n(p-1)}$ is of dimension $1$, for any subspace
$$\sF_{n(p-1)}\subset W\otimes R^{n(p-1)},$$ there is a subspace
$W'\subset W$ of dimension $r_{n(p-1)}$ such that
$$\sF_{n(p-1)}=W'\otimes R^{n(p-1)}.$$
Thus ${\rm D}_{\ell}\cdot\sF_{n(p-1)}=W'\otimes {\rm D}_{\ell}\cdot
R^{n(p-1)}=W'\otimes R^{n(p-1)-\ell}\subset\sF_{n(p-1)-\ell}\,$ for
all $0\le\ell\le n(p-1)$, which proves the lemma.
\end{proof}

Recall that the sheaf $B^1_X$ of locally exact differential forms on
$X$ is defined by exact sequence \ga{2.10} {0\to \sO_X\to F_*\sO_X
\to B^1_X\to 0.}

\begin{thm}\label{thm2.3} Let $X$ be a smooth projective variety of
dimension $n$ with $\mu(\Omega^1_X)>0$ and $\sL$ a torsion free
sheaf of rank $1$ on $X$. Assume that ${\rm T}^{\ell}(\Omega^1_X)$
($0<\ell<n(p-1)$) are semi-stable. Then, for any nontrivial
subsheaves $\sE\subset F_*\sL$ and $B'\subset B^1_X$, we have
\ga{2.11} {\mu(\sE)-\mu(F_*\sL)\le -
\frac{\mu(\Omega^1_X)}{p\cdot{\rm rk}(\sE)}\cdot\frac{n(p-1)}{2}}
\ga{2.12}{\mu(B')-\mu(B^1_X)\le
-\frac{\mu(\Omega^1_X)}{p\cdot(p^n-1)}\cdot\frac{n(p-1)}{2}}  when
${\rm rk}(\sE)<{\rm rk}(F_*\sL)$ and ${\rm rk}(B')<{\rm rk}(B_X^1)$.
\end{thm}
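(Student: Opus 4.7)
The plan is to establish \eqref{2.11} by a direct application of Lemma \ref{lem2.1}, and then derive \eqref{2.12} from \eqref{2.11} by taking $\sE$ to be the preimage of $B'$ in $F_*\sO_X$.

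For \eqref{2.11}, I would apply Lemma \ref{lem2.1} with $W=\sL$ and bound the two contributions on the right of \eqref{2.5} separately. The slope bound $\mu(\sF_\ell)\le\mu(V_\ell/V_{\ell+1})$ must hold for every $\ell$: for $0<\ell<n(p-1)$ this is immediate from the semi-stability hypothesis on ${\rm T}^\ell(\Omega^1_X)$, since tensoring with the rank one sheaf $\sL$ preserves semi-stability, and for $\ell=0$ and $\ell=n(p-1)$ it is automatic because $V_0/V_1\cong\sL$ and $V_{n(p-1)}\cong\sL\otimes{\rm T}^{n(p-1)}(\Omega^1_X)$ both have rank $1$. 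For the second contribution I need $\sum_{\ell=0}^{m}(\tfrac{n(p-1)}{2}-\ell)r_\ell\ge\tfrac{n(p-1)}{2}$; by \eqref{2.6}--\eqref{2.8} this quantity is bounded below by $\tfrac{n(p-1)}{2}r_0$ provided $m\ne n(p-1)$. The case $m=n(p-1)$ is excluded by Lemma \ref{lem2.2}, which would force ${\rm rk}(\sE)\ge p^n={\rm rk}(F_*\sL)$, contradicting the rank hypothesis. Finally, $r_0\ge 1$ follows from adjunction: the inclusion $\sE\inj F_*\sL$ corresponds to a nonzero map $F^*\sE\to\sL$, which factors as $F^*\sE\hookrightarrow V_0\twoheadrightarrow V_0/V_1=\sL$, so $F^*\sE\not\subset V_1$ and hence $\sF_0\ne 0$; since $V_0/V_1$ has rank $1$ this forces $r_0=1$. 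Assembling the two bounds inside Lemma \ref{lem2.1} yields \eqref{2.11}.

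For \eqref{2.12}, let $\sE\subset F_*\sO_X$ be the preimage of $B'$ under $F_*\sO_X\surj B^1_X$, so that $\deg(\sE)=\deg(B')$ and ${\rm rk}(\sE)={\rm rk}(B')+1<p^n$, whence \eqref{2.11} applies to $\sE$. Using $\mu(F_*\sO_X)=\tfrac{(p-1)n\mu(\Omega^1_X)}{2p}$ from \eqref{2.4} together with $\mu(B^1_X)=\tfrac{p^n\mu(F_*\sO_X)}{p^n-1}=\tfrac{p^{n-1}n(p-1)\mu(\Omega^1_X)}{2(p^n-1)}$, a straightforward rearrangement of \eqref{2.11} converts to \eqref{2.12}. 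The non-obvious point is that a crude rescaling of \eqref{2.11} produces an extra positive term $\tfrac{p^n-1-r}{rp^n}\mu(B^1_X)$, where $r={\rm rk}(B')$; substituting the explicit value of $\mu(B^1_X)$ shows this positive term cancels exactly against part of the error from \eqref{2.11}, leaving precisely the constant $\tfrac{n(p-1)}{2p(p^n-1)}\mu(\Omega^1_X)$.

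The main obstacle is arranging the two auxiliary facts $r_0=1$ and $m\ne n(p-1)$ so that the lower bound \eqref{2.8} becomes available. Once these are in place, the derivation of \eqref{2.11} reduces to substitution into Lemma \ref{lem2.1}, and the passage to \eqref{2.12} is forced by the explicit formulas for $\mu(F_*\sO_X)$ and $\mu(B^1_X)$ up to the exact cancellation noted above.
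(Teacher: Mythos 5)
Your proposal is correct and follows essentially the same route as the paper: exclude $m=n(p-1)$ via Lemma \ref{lem2.2}, invoke semi-stability of the graded pieces together with the bound \eqref{2.8} inside Lemma \ref{lem2.1} to get \eqref{2.11}, and then pass to \eqref{2.12} through the rank-$({\rm rk}(B')+1)$ preimage $\sE\subset F_*\sO_X$ and the formula $\mu(F_*\sO_X)=\tfrac{n(p-1)}{2p}\mu(\Omega^1_X)$. Your explicit adjunction argument for $r_0=1$ is a worthwhile clarification of a point the paper leaves implicit in \eqref{2.6}, but it does not change the method.
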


\begin{proof} Since ${\rm rk}(\sE)<{\rm rk}(F_*\sL)=p^n$, by Lemma
\ref{lem2.2}, $m\neq n(p-1)$. On the other hand, when $\sL$ is of
rank $1$, $V_{\ell}/V_{\ell+1}\cong \sL\otimes {\rm
T}^{\ell}(\Omega^1_X)$ are semi-stable by the assumption. Thus, by
\eqref{2.5}, we have \ga{2.13}{\mu(\sE)-\mu(F_*\sL)\le -
\frac{\mu(\Omega^1_X)}{p\cdot{\rm
rk}(\sE)}\sum^m_{\ell=0}(\frac{n(p-1)}{2}-\ell)r_{\ell}} which
implies \eqref{2.11} by \eqref{2.8} since $m\neq n(p-1)$.

To show \eqref{2.12}, for $B'\subset B^1_X$ of rank $r<{\rm
rk}(B^1_X)$, let $\sE\subset F_*\sO_X$ be the subsheaf of rank $r+1$
such that we have exact sequence
$$0\to \sO_X\to\sE\to B'\to 0.$$
Substitute \eqref{2.11} to
$\mu(B')-\mu(B^1_X)=\frac{r+1}{r}\mu(\sE)-\frac{p^n}{p^n-1}\mu(F_*\sO_X)$,
we have $$\mu(B')-\mu(B^1_X)\le
\frac{p^n-1-r}{r(p^n-1)}\mu(F_*\sO_X)-\frac{n(p-1)}{2rp}\mu(\Omega^1_X).$$
By the formula \eqref{2.4}, we have
$\mu(F_*\sO_X)=\frac{n(p-1)}{2p}\mu(\Omega^1_X)$. Thus
$$\mu(B')-\mu(B^1_X)\le
-\frac{\mu(\Omega^1_X)}{p\cdot(p^n-1)}\cdot\frac{n(p-1)}{2}.$$
\end{proof}

\begin{cor}\label{cor2.4} Let $X$ be a smooth projective curve of
genus $g\ge 2$. Then, for all proper sub-bundles $\sE\subset
F_*\sL$, $B'\subset B^1_X$, we have
$$\mu(\sE)-\mu(F_*\sL)\le-\,\frac{p-{\rm rk}(\sE)}{p}(g-1)$$
$$\mu(B')-\mu(B^1_X)\le -\,\frac{p-1-{\rm rk}(B')}{p}(g-1).$$
\end{cor}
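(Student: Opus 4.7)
The plan is to specialize Theorem \ref{thm2.3} to the curve case $n=1$ and sharpen the estimate inside \eqref{2.13}, exploiting the fact that on a curve every quotient $V_\ell/V_{\ell+1}\cong\sL\otimes {\rm T}^\ell(\Omega^1_X)$ is a line bundle. The hypotheses of Theorem \ref{thm2.3} are trivially satisfied when $X$ is a curve of genus $g\ge 2$: one has $\mu(\Omega^1_X)=2g-2>0$, and each ${\rm T}^\ell(\Omega^1_X)$ is a line bundle, hence automatically semi-stable. Thus \eqref{2.13} applies, and everything comes down to replacing the crude inequality \eqref{2.8} by an exact evaluation of $\sum_{\ell=0}^{m}\bigl(\tfrac{p-1}{2}-\ell\bigr)r_\ell$.

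For the first inequality, each $\sF_\ell\subset V_\ell/V_{\ell+1}$ sits inside a line bundle, so $r_\ell\in\{0,1\}$, and the injective morphism $\nabla:\sF_\ell\to\sF_{\ell-1}\otimes\Omega^1_X$ of Lemma \ref{lem2.1} maps between line bundles (or is zero), forcing $r_\ell\le r_{\ell-1}$. Since $m$ is taken maximal with $V_m\cap F^*\sE\neq 0$ we have $r_m=1$, so $r_0=r_1=\cdots=r_m=1$ and $m+1={\rm rk}(\sE)=:k$. A short computation then gives $\sum_{\ell=0}^{k-1}\bigl(\tfrac{p-1}{2}-\ell\bigr)=\tfrac{k(p-k)}{2}$, and substituting this together with $\mu(\Omega^1_X)=2g-2$ into \eqref{2.13} yields the first bound.

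For the second inequality I would mimic the end of the proof of Theorem \ref{thm2.3}: given $B'\subset B^1_X$ of rank $r<p-1$, form the rank $r+1$ subsheaf $\sE\subset F_*\sO_X$ fitting in $0\to\sO_X\to\sE\to B'\to 0$, apply the refined first bound with $k=r+1$ and $\sL=\sO_X$, and combine with $\mu(F_*\sO_X)=\tfrac{(p-1)(g-1)}{p}$ from \eqref{2.4} and $\mu(B^1_X)=\tfrac{p}{p-1}\mu(F_*\sO_X)$ obtained from \eqref{2.10}. The identity $\mu(B')-\mu(B^1_X)=\tfrac{r+1}{r}\mu(\sE)-\mu(B^1_X)$ then reduces the claim to an elementary rearrangement. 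The only genuinely new ingredient, and hence the main (mild) obstacle, is recognizing that on a curve the monotonicity $r_\ell\le r_{\ell-1}$ together with $r_\ell\in\{0,1\}$ pins down the sequence exactly, converting \eqref{2.8} into the sharp identity $\tfrac{k(p-k)}{2}$; everything else is linear algebra with the formulas already recorded in \eqref{2.4}.
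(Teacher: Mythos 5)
Your proposal is correct and follows essentially the same route as the paper: the paper's proof likewise observes that on a curve each $\sF_{\ell}$ is a rank-one subsheaf of the line bundle $V_{\ell}/V_{\ell+1}=\sL\otimes\omega_X^{\ell}$, so $r_{\ell}=1$ for $0\le\ell\le m$, and then evaluates the sum in \eqref{2.5} exactly to obtain $-\frac{(p-{\rm rk}(\sE))(g-1)}{p}$, deducing the $B'$ inequality in the same way via the extension $0\to\sO_X\to\sE\to B'\to 0$. Your extra remarks (monotonicity $r_{\ell}\le r_{\ell-1}$ and the closed form $\frac{k(p-k)}{2}$) only make explicit what the paper leaves implicit.
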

\begin{proof} When ${\rm dim}(X)=1$,
$V_{\ell}/V_{\ell+1}=\sL\otimes\omega_X^{\ell}$ are line bundles and
thus $r_{\ell}=1$ ($0\le \ell\le m$) in \eqref{2.5}. Then we can
rewrite \eqref{2.5}:
$$\mu(\sE)-\mu(F_*\sL)=\sum^m_{\ell=0}\frac{\mu(\sF_{\ell})-\mu(\frac{V_{\ell}}{V_{\ell+1}})}{p\cdot{\rm rk}(\sE)}
-\frac{(p-{\rm rk}(\sE))(g-1)}{p},$$ which impiles the following
inequality
$$\mu(\sE)-\mu(F_*\sL)\le-\frac{p-{\rm rk}(\sE)}{p}(g-1)$$
and the equality holds if and only if
$\sF_{\ell}=V_{\ell}/V_{\ell+1}$. Similarly, we have
$$\mu(B')-\mu(B^1_X)\le -\frac{p-1-{\rm rk}(B')}{p}(g-1).$$
\end{proof}

\begin{cor}\label{cor2.5} Let $X$ be a smooth projective surface
with $\mu(\Omega^1_X)>0$. If $\Omega^1_X$ is semi-stable,  then for
any proper nontrivial $B'\subset B^1_X$
$$\mu(B')-\mu(B^1_X)\le
-\frac{\mu(\Omega^1_X)}{p\cdot(p+1)}$$
\end{cor}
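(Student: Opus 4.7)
The plan is to obtain this corollary as a direct specialization of Theorem~\ref{thm2.3} to the surface case ($n=2$), applied with $\sL=\sO_X$. The assumption $\mu(\Omega^1_X)>0$ is already given, so the only non-trivial hypothesis of Theorem~\ref{thm2.3} that needs verification is the semi-stability of ${\rm T}^{\ell}(\Omega^1_X)$ for $0<\ell<2(p-1)$. As noted in the introduction, on a smooth projective surface this follows from the semi-stability of $\Omega^1_X$ alone, because $\Omega^1_X$ has rank two and the quotients ${\rm T}^{\ell}(\Omega^1_X)$ in the relevant range admit clean descriptions in terms of symmetric or divided power constructions of a rank-two sheaf, for which the inheritance of semi-stability is standard.

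Once the hypothesis is in place, I would substitute $n=2$ directly into \eqref{2.12}. Since $\frac{n(p-1)}{2}=p-1$ and $p^n-1=(p-1)(p+1)$, the right-hand side collapses to
\[
-\frac{\mu(\Omega^1_X)}{p\cdot(p^2-1)}\cdot (p-1) \;=\; -\frac{\mu(\Omega^1_X)}{p(p+1)},
\]
which is precisely the bound claimed in Corollary~\ref{cor2.5}. No further analysis of the filtration \eqref{2.1} is needed beyond what is already packaged in Theorem~\ref{thm2.3}.

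The only potentially non-routine step is the passage from semi-stability of $\Omega^1_X$ to that of all ${\rm T}^{\ell}(\Omega^1_X)$ for $0<\ell<2(p-1)$; this relies crucially on $\rank(\Omega^1_X)=2$ and is the reason the corollary is stated specifically for surfaces. I would cite this from \cite{Su} (or, if desired, sketch the short argument using the explicit description of ${\rm T}^{\ell}$ for a rank-two bundle) rather than reprove it here. With that input, the proof of Corollary~\ref{cor2.5} reduces to the arithmetic simplification above.
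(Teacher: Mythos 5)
Your proposal matches the paper's own proof: the paper likewise invokes the explicit description of ${\rm T}^{\ell}(\Omega^1_X)$ for a surface (as ${\rm Sym}^{\ell}(\Omega^1_X)$ for $\ell<p$ and ${\rm Sym}^{2(p-1)-\ell}(\Omega^1_X)\otimes\omega_X^{\ell-(p-1)}$ for $\ell\ge p$, from Proposition 3.5 of \cite{Su}) to deduce semi-stability of all the ${\rm T}^{\ell}$ from that of $\Omega^1_X$, and then applies Theorem~\ref{thm2.3} with $n=2$, the bound $-\frac{\mu(\Omega^1_X)}{p(p+1)}$ following by the same arithmetic you carry out. The only cosmetic difference is that you leave the rank-two semi-stability inheritance as a citation while the paper writes out the two-case formula for ${\rm T}^{\ell}$; the substance is identical.
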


\begin{proof} When ${\rm dim}(X)=2$, we have (cf. Proposition 3.5 of \cite{Su})
$${\rm T}^{\ell}(\Omega^1_X)=\left\{
\begin{array}{llll}{\rm Sym}^{\ell}(\Omega^1_X)  &\mbox{when $\ell<p$}\\
{\rm Sym}^{2(p-1)-\ell}(\Omega^1_X)\otimes\omega_X^{\ell-(p-1)}
&\mbox{when $\ell\ge p$}
\end{array}\right.$$
where $\omega_X=\Omega^2_X=\sO_X(K_X)$ is the canonical line bundle
of $X$. Thus ${\rm T}^{\ell}(\Omega^1_X)$ are semi-stable whenever
$\Omega^1_X$ is semi-stable. Then the corollary follows the theorem.
\end{proof}

\begin{rmk}\label{rmk2.6} When $X$ is a curve of genus $g\ge 2$,
the stability of $B^1_X$ was proved by K. Joshi in \cite{J}. When
$X$ is a surface with $\mu(\Omega^1_X)>0$, if $\Omega^1_X$ is
semi-stable, Y. Kitadai and H. Sumihiro proved in \cite{Kit} that
$B^1_X$ is semi-stable.
\end{rmk}

\section{The stability of $B^2_X$ and $Z^1_X$}

Let $X$ be a smooth projective surface with $\mu(\Omega^1_X)>0$.
When $\Omega^1_X$ is semi-stable, Y. Kitadai and H. Sumihiro proved
in \cite{Kit} that $B^1_X$ and $B^2_X$ are semi-stable, but it is
left open whether $Z^1_X$ is semi-stable or not (cf. \cite[Remark
3.4]{Kit}). In this section, we consider the stability of  $B^2_X$
and $Z^1_X$.

Recall the definition of $B_X^i$, $Z^1_X$, consider de Rham complex
of $X$:
$$F_*\sO_X\xrightarrow{d_1}F_*\Omega^1_X\xrightarrow{d_2}F_*\Omega^2_X=F_*\omega_X$$
the vector bundles $B_X^i$ ($i=1,\,\,2$) and $Z_X^1$ are defined by
$$B^i_X:={\rm
image}\,(\,F_*\Omega^{i-1}_X\xrightarrow{d_i}F_*\Omega^i_X\,)\,\,,\,\,\,\,
Z_X^1:={\rm
kernel}\,(\,F_*\Omega^1_X\xrightarrow{d_2}F_*\Omega^2_X\,)\,.$$ By
the definition and Cartier isomorphism, these bundles are suited in
the following exact sequences \ga{3.1} {0\to\sO_X\to F_*\sO_X\to
B^1_X\to 0} \ga{3.2} {0\to Z^1_X\to F_*\Omega^1_X\to B^2_X\to 0}
\ga{3.3} {0\to B^1_X\to Z^1_X\to \Omega^1_X\to 0} \ga{3.4} {0\to
B^2_X\to F_*\omega_X\to \omega_X\to 0} where
$\omega_X=\Omega^2_X=\sO_X(K_X)$.

\begin{lem}\label{lem3.1} Let $X$ be a smooth projective surface with $\mu(\Omega^1_X)>0$.
If $\Omega^1_X$ is semistable, then, for any subsheaf $B'\subset
B^2_X$, we have \ga{3.5} {\mu(B')-\mu(B_X^2)\le -\,\frac{{\rm
rk}(B_X^2)-{\rm rk}(B')}{p(p+1){\rm rk}(B')}\mu(\Omega^1_X)\,.} In
particular, $B_X^2$ is stable.
\end{lem}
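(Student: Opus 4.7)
The plan is to apply Theorem \ref{thm2.3} with $\sL = \omega_X$, using the exact sequence
\eq{plan3.1}{0 \to B^2_X \to F_*\omega_X \to \omega_X \to 0,}
which realizes $B^2_X$ as a subsheaf of $F_*\omega_X$ of rank $p^2-1$. For any $B' \subset B^2_X \subset F_*\omega_X$ we have ${\rm rk}(B') \le p^2 - 1 < p^2 = {\rm rk}(F_*\omega_X)$, so Theorem \ref{thm2.3} applies. The hypothesis on symmetric powers is satisfied because on a surface, semistability of $\Omega^1_X$ together with the identification of ${\rm T}^\ell(\Omega^1_X)$ with twisted symmetric powers (as recalled in the proof of Corollary \ref{cor2.5}) gives semistability of ${\rm T}^\ell(\Omega^1_X)$ for $0 < \ell < 2(p-1)$. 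Applying \eqref{2.11} with $n=2$ yields
\eq{plan3.2}{\mu(B') - \mu(F_*\omega_X) \le -\frac{(p-1)\,\mu(\Omega^1_X)}{p\cdot {\rm rk}(B')}.}

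Next I compute the difference $\mu(F_*\omega_X) - \mu(B^2_X)$. From \eqref{2.4} with $W = \omega_X$ and $n=2$, one gets $\mu(F_*\omega_X) = \frac{p+1}{p}\mu(\Omega^1_X)$, while $\mu(\omega_X) = 2\mu(\Omega^1_X)$. Using the exact sequence \eqref{plan3.1} to solve for $\mu(B^2_X)$, a short calculation gives
\eq{plan3.3}{\mu(B^2_X) = \mu(F_*\omega_X) - \frac{\mu(\Omega^1_X)}{p(p+1)}.}

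Combining \eqref{plan3.2} and \eqref{plan3.3} and writing $r = {\rm rk}(B')$, I expect
$$\mu(B') - \mu(B^2_X) \le -\frac{(p-1)\,\mu(\Omega^1_X)}{p\,r} + \frac{\mu(\Omega^1_X)}{p(p+1)} = -\frac{(p^2-1)-r}{p(p+1)\,r}\,\mu(\Omega^1_X),$$
which is exactly \eqref{3.5}. Strictness when $r < p^2 - 1$ then gives stability of $B^2_X$.

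The argument is essentially a direct reduction to Theorem \ref{thm2.3}, so there is no serious obstacle; the only mildly delicate point is the arithmetic check that the two terms combine into the stated coefficient $({\rm rk}(B^2_X) - {\rm rk}(B'))/(p(p+1){\rm rk}(B'))$, which works out cleanly because $(p+1)^2 - p(p+2) = 1$. The key conceptual inputs are (i) the semistability of all ${\rm T}^\ell(\Omega^1_X)$ on a surface once $\Omega^1_X$ is semistable, and (ii) the fact that $B^2_X$ sits inside $F_*\omega_X$ with $\omega_X$ of rank one, so that the rank-one hypothesis of Theorem \ref{thm2.3} is available for free.
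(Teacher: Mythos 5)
Your proposal is correct and follows essentially the same route as the paper: apply Theorem \ref{thm2.3} to $B'\subset B^2_X\subset F_*\omega_X$ via the exact sequence \eqref{3.4}, compute $\mu(F_*\omega_X)=\frac{p+1}{p}\mu(\Omega^1_X)$ and $\mu(B^2_X)=\frac{p+2}{p+1}\mu(\Omega^1_X)$ from \eqref{2.4} and \eqref{3.4}, and combine. The arithmetic checks out and matches \eqref{3.5} exactly.
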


\begin{proof} For $B'\subset B_X^2$, by exact sequence \eqref{3.4}
and Theorem \ref{thm2.3}, \ga{3.6} {\mu(B')\le
\mu(F_*\omega_X)-\frac{p-1}{p\,{\rm rk}(B')}\mu(\Omega^1_X)\,.} By
the formula \eqref{2.4} and the exact sequence \eqref{3.4}, we have
\ga{3.7}{\mu(F_*\omega_X)=\frac{p+1}{p}\mu(\Omega^1_X)\,,\quad
\mu(B^2_X)=\frac{p+2}{p+1}\mu(\Omega^1_X)\,.} Substitute \eqref{3.7}
to \eqref{3.6}, we have the inequality \eqref{3.5}.

\end{proof}

\begin{thm}\label{thm3.2} Let $X$ be a smooth projective surface with
$\mu(\Omega^1_X)>0$. Then, when $\Omega^1_X$ is semistable, the
sheaf $Z^1_X$ of locally closed $1$-forms is stable when $p>3$.
$Z^1_X$ is semistable when $p=3$.
\end{thm}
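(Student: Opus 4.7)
The plan is to combine the stability of $B^1_X$ (Corollary \ref{cor2.5}) with the semi-stability of $\Omega^1_X$ through the exact sequence \eqref{3.3}, then to handle the small-rank cases via the filtration of Lemma \ref{lem2.1} applied to $F^*\sE\subset V=F^*F_*\Omega^1_X$.

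First I would compute slopes. Writing $\alpha:=\mu(\Omega^1_X)>0$, formula \eqref{2.4} applied with $W=\Omega^1_X$, the rank counts ${\rm rk}(B^1_X)=p^2-1$ and ${\rm rk}(Z^1_X)=p^2+1$, and \eqref{3.7} together give
\[
\mu(B^1_X)=\tfrac{p}{p+1}\alpha,\qquad \mu(Z^1_X)=\tfrac{p^2-p+2}{p^2+1}\alpha,
\]
so that $\mu(B^1_X)<\mu(Z^1_X)<\alpha$ with $\mu(Z^1_X)-\mu(B^1_X)=\tfrac{2\alpha}{(p+1)(p^2+1)}$ and $\alpha-\mu(Z^1_X)=\tfrac{(p-1)\alpha}{p^2+1}$. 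For a proper non-trivial $\sE\subsetneq Z^1_X$ of rank $r$, the sequence \eqref{3.3} produces $\sE_1:=\sE\cap B^1_X$ and $\sE_2:=\sE/\sE_1\hookrightarrow\Omega^1_X$ of ranks $r_1,r_2$ with $r_2\le 2$. Combining $\mu(\sE_1)\le\mu(B^1_X)-\tfrac{\alpha}{p(p+1)}$ from Corollary \ref{cor2.5} (when $\sE_1$ is proper non-trivial in $B^1_X$) with $\mu(\sE_2)\le\alpha$, the primary estimate
\[
r\bigl(\mu(\sE)-\mu(Z^1_X)\bigr)\le\frac{[(p^2-1)r_2-2r_1]\,\alpha}{(p+1)(p^2+1)}-r_1\epsilon,
\]
where $\epsilon\in\{0,\tfrac{\alpha}{p(p+1)}\}$, is strictly negative in three easy cases: $r_2=0$; $\sE_1=B^1_X$ (with the $r_2=2$ sub-case forced by equal ranks); and both $\sE_i$ non-trivial whenever $r_1(p+1)\ge p(p-1)r_2$.

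For the remaining cases ($\sE_1=0$, or both $\sE_i$ non-trivial with $r_1$ small), I would apply the filtration \eqref{2.1} to $F^*\sE\subset V$: the graded pieces $\sF_\ell\subset V_\ell/V_{\ell+1}=\Omega^1_X\otimes{\rm T}^\ell(\Omega^1_X)$ form an initial segment $\ell\in[0,m]$ with $m\le r-1$ by the injectivity of $\nabla$. When $r\le p-1$ (so $m\le p-2$), each piece $\Omega^1_X\otimes\sym^\ell\Omega^1_X\cong\sym^{\ell+1}\Omega^1_X\oplus\sym^{\ell-1}\Omega^1_X\otimes\omega_X$ (Clebsch--Gordan, valid for $\ell+1\le p-1$) is semi-stable by Corollary \ref{cor2.5}, and Lemma \ref{lem2.1} yields $\mu(\sE)\le\alpha\cdot\tfrac{pr-p+1}{pr}$; this is strictly less than $\mu(Z^1_X)$ for every $r\le p$ because $(p-1)(p^2-pr+1)>0$.

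The main obstacle is the piece $V_{p-1}/V_p\cong\Omega^1_X\otimes\sym^{p-1}\Omega^1_X$ once the filtration level $\ell=p-1$ appears (unavoidable when $r\ge p$). Clebsch--Gordan now exposes $\sym^p\Omega^1_X$ as a direct summand, and in characteristic $p$ this bundle contains a copy of $F^*\Omega^1_X$ of the same slope, so its semi-stability does not follow from that of $\Omega^1_X$ alone. Controlling $\mu(\sF_{p-1})$ precisely is the delicate point: for $p>3$ there remains enough slack in every sub-case (in particular for $r_2=2$ with small $r_1$) to retain strict inequality, while for $p=3$ a boundary configuration saturates the bound and yields only semi-stability, matching the statement.
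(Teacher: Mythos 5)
Your setup and the easy cases are sound: the slope computations, the reduction via \eqref{3.3} to subsheaves $\sE_1\subset B^1_X$ and $\sE_2\subset\Omega^1_X$ of ranks $r_1,r_2$, the disposal of the configurations with $r_1(p+1)\ge p(p-1)r_2$ using Corollary \ref{cor2.5}, and the filtration argument for $r\le p-1$ (where every graded piece $\Omega^1_X\otimes\sym^\ell\Omega^1_X$ with $\ell\le p-2$ is semistable by the rank count $2+(\ell+1)\le p+1$ and Ilangovan--Mehta--Parameswaran) are all correct. But there is a genuine gap exactly where you flag one: for $r_2=2$ and $p-2\le r_1\le 2p-4$ neither of your two estimates applies, and your proposed filtration of $F^*\sE$ inside $F^*F_*\Omega^1_X$ requires controlling $\mu(\sF_{p-1})$ inside $V_{p-1}/V_p\cong\Omega^1_X\otimes\sym^{p-1}\Omega^1_X$, whose semistability you cannot establish (it contains $\sym^p\Omega^1_X\supset F^*\Omega^1_X$, and Frobenius pullback need not preserve semistability). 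The sentence ``for $p>3$ there remains enough slack in every sub-case'' is an assertion, not an argument, and the claim that $p=3$ merely saturates a bound is read off from the statement rather than derived.

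The paper closes precisely this gap by a different device. Instead of filtering $F^*\sE$ inside $F^*F_*\Omega^1_X$, it lifts $B'=\sE_1\subset B^1_X$ through \eqref{3.1} to a subsheaf $\sE\subset F_*\sO_X$ containing $\sO_X$ and filters $F^*\sE$ inside $F^*F_*\sO_X$, whose graded pieces ${\rm T}^\ell(\Omega^1_X)$ are all semistable; this yields the sharper bound \eqref{3.11} with the correction term $\Sigma_m=\sum_{\ell\ge1}(p-1-\ell)r_\ell$. The key combinatorial point of Lemma \ref{lem3.3}, using $r_{2p-2-\ell}\ge r_\ell$, is that $\Sigma_m\le 1$ forces $m+1>2p-3$ and hence ${\rm rk}(B')\ge 2p-3$ --- exactly ruling out the ``small $r_1$ with $r_2=2$'' configurations you could not handle --- while otherwise $\Sigma_m\ge 2$ supplies the missing negativity in the quantity $N=(p^2+1)\Sigma_m+(p+1){\rm rk}(B')-(p^2-p){\rm rk}(B'')$. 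The case $B'=0$ then has rank at most $2$, so only the level-$1$ piece $\Omega^1_X\otimes\Omega^1_X$ (semistable for $p\ge 3$) of the filtration of $F^*F_*\Omega^1_X$ is ever needed, and the problematic piece at level $p-1$ never enters. You would need to import this lifting-to-$F_*\sO_X$ step, or find a substitute for it, to complete your proof.
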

\begin{proof} For $B\subset Z^1_X$ with ${\rm rk}(B)<{\rm rk}(Z_X^1)$, by
\eqref{3.3}, there are subsheaves $B'\subset B^1_X$,
$B''\subset\Omega^1_X$ satisfy the exact sequence \ga{3.8} {0\to
B'\to B\to B''\to 0.} If $B''=0$,
$\mu(B)=\mu(B')\le\mu(B_X^1)<\mu(Z_X^1)$. Thus we can assume
$B''\neq 0$. If ${\rm rk}(B')={\rm rk}(B^1_X)$, then ${\rm
rk}(B'')=1$ and $$\mu(B)-\mu(Z_X^1)\le
-\frac{p-1}{p^2(p^2+1)}\mu(\Omega^1_X).$$ Thus we can assume ${\rm
rk}(B')<{\rm rk}(B^1_X)$ if $B'\neq 0$. We complete the proof in the
following two lemmas, which deal with the cases $B'\neq0$ and $B'=0$
respectively.
\end{proof}

\begin{lem}\label{lem3.3} For $B\subset Z^1_X$ with ${\rm rk}(B)<{\rm rk}(Z_X^1)$, assume that $B'$ defined in
\eqref{3.8} is nontrivial with ${\rm rk}(B')<{\rm rk}(B^1_X)$. Then,
when $p>3$,
$$\mu(B)-\mu(Z^1_X)\le -\frac{\mu(\Omega^1_X)}{p(p^2+1){\rm
rk}(B)}$$ and, when $p=3$, $\mu(B)-\mu(Z^1_X)\le 0$.
\end{lem}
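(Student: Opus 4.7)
My plan is to split $\mu(B)$ using the exact sequence \eqref{3.8} and estimate $\mu(B')$ and $\mu(B'')$ separately, then compare with $\mu(Z^1_X)$. First I would record the numerical data: from \eqref{2.4} and \eqref{3.1}, $\mu(B^1_X)=\frac{p}{p+1}\mu(\Omega^1_X)$, and from \eqref{3.3} and \eqref{3.7}, $\mu(Z^1_X)=\frac{p^2-p+2}{p^2+1}\mu(\Omega^1_X)$. Since $\Omega^1_X$ is semi-stable and $B''\subset\Omega^1_X$ is nontrivial, $\mu(B'')\le\mu(\Omega^1_X)$. Writing $r={\rm rk}(B)=r'+r''$ with $r''\in\{1,2\}$, the target reduces to an estimate on $\mu(B')$.

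Combining the naive bound $\mu(B')\le\frac{p-1}{p}\mu(\Omega^1_X)$ from Corollary \ref{cor2.5} with $\mu(B'')\le\mu(\Omega^1_X)$, a direct substitution yields
\[
\mu(B)-\mu(Z^1_X)\le\frac{p(p-1)r''-(p+1)r'}{p(p^2+1)r}\mu(\Omega^1_X),
\]
which proves the target inequalities whenever $(p+1)r'\ge p(p-1)r''+1$ (for $p>3$) or $(p+1)r'\ge p(p-1)r''$ (for $p=3$). This handles most pairs $(r',r'')$, but fails for small $r'$ such as $r'=r''=1$.

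For the remaining small-$r'$ cases I would sharpen the estimate on $\mu(B')$ by revisiting the filtration argument. The extension $0\to\sO_X\to\sE\to B'\to 0$ places $\sE\subset F_*\sO_X$ of rank $r'+1$; applying Lemma \ref{lem2.1} with $W=\sO_X$ and using $\mu(F_*\sO_X)=\frac{p-1}{p}\mu(\Omega^1_X)$, the identity \eqref{2.5} collapses to
\[
pr'\mu(B')=\mu(\Omega^1_X)\sum_\ell\ell\,r_\ell+\sum_\ell r_\ell\bigl(\mu(\sF_\ell)-\mu(V_\ell/V_{\ell+1})\bigr),
\]
where $r_\ell={\rm rk}(\sF_\ell)$ and $V_\ell/V_{\ell+1}\cong T^\ell(\Omega^1_X)$. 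Since each $T^\ell(\Omega^1_X)$ is semi-stable (cf.\ the proof of Corollary \ref{cor2.5}), the second sum is $\le 0$, giving $\mu(B')\le\frac{\sum_\ell\ell\,r_\ell}{pr'}\mu(\Omega^1_X)$. The constraints $\sum r_\ell=r'+1$, $r_0=1$ (since $\sO_X\subset\sE$), $r_{2(p-1)}=0$ (by Lemma \ref{lem2.2}, as ${\rm rk}(\sE)<p^2$), and $r_\ell\le 2r_{\ell-1}$ (from the injectivity $\sF_\ell\inj\sF_{\ell-1}\otimes\Omega^1_X$) force $\sum\ell\,r_\ell$ to be strictly smaller than $(p-1)r'$ when $r'$ is small; for instance, $\sum\ell\,r_\ell=1$ when $r'=1$ (so $\mu(B')\le\frac{1}{p}\mu(\Omega^1_X)$) and $\le 3$ when $r'=2$.

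A short case analysis on $r''\in\{1,2\}$ and on the small values of $r'$ not covered by the naive bound then completes the proof: for $p>3$ the sharp gap $-\mu(\Omega^1_X)/(p(p^2+1)r)$ emerges in every case, whereas for $p=3$ the estimate can only be pushed to $\le 0$ (equality is approached at $r'=3,\,r''=2$, where the naive bound already saturates). The main obstacle is this combinatorial sharpening of Corollary \ref{cor2.5}: without the $\nabla$-injectivity constraint $r_\ell\le 2r_{\ell-1}$ compressing $\sum\ell\,r_\ell$ for small $r'$, the cases $r'=1$ (with $r''=1$ or $2$) cannot be closed.
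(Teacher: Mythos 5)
Your setup is sound and is in fact algebraically identical to the paper's: the paper also passes from $B'$ to $\sE\subset F_*\sO_X$ via $0\to\sO_X\to\sE\to B'\to 0$, applies Lemma \ref{lem2.1} with the semi-stability of $V_\ell/V_{\ell+1}\cong{\rm T}^\ell(\Omega^1_X)$, and reduces everything to showing that
$$N:=(p^2+1)\Sigma_m+(p+1)\,{\rm rk}(B')-(p^2-p)\,{\rm rk}(B'')$$
is positive ($p>3$) or non-negative ($p=3$), where $\Sigma_m=\sum_{\ell=1}^m(p-1-\ell)r_\ell=(p-1)r'-\sum_\ell\ell\,r_\ell$. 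Your quantity $\sum_\ell\ell\,r_\ell$ is just $(p-1)r'-\Sigma_m$, and your "sharpened" numerator coincides with $-N$; your "naive bound" is the case $\Sigma_m=0$. So the real content of the lemma is exactly the combinatorial lower bound on $\Sigma_m$ (equivalently, the upper bound on $\sum_\ell\ell\,r_\ell$), and this is where your proposal has a genuine gap.

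The gap is twofold. First, the set of pairs $(r',r'')$ not covered by the naive bound is not a short finite list: for $r''=1$ it fails for all $r'\le p-2$, and for $r''=2$ for all $r'\le 2p-4$, so the number of exceptional cases grows linearly in $p$ and cannot be dispatched by checking $r'=1,2$ plus "a short case analysis"; a bound uniform in $p$ is required. Second, the constraints you list ($\sum r_\ell=r'+1$, $r_0=1$, $r_{2(p-1)}=0$, $r_\ell\le 2r_{\ell-1}$, all $r_\ell\ne 0$ for $\ell\le m$) omit the symmetry inequality $r_{2p-2-\ell}\ge r_\ell$ for $\ell>p-1$ (from \eqref{2.7} and \cite[Proposition 4.7]{Su}), and it is precisely this inequality that the paper uses to close the mid-range cases: it shows that $\Sigma_m\le 1$ forces $m+1>2p-3$, hence ${\rm rk}(B')\ge m\ge 2p-3$, after which $(p+1)(2p-3)-(p^2-p){\rm rk}(B'')\ge 0$ does the job; while if $\Sigma_m\ge 2$ then $(p^2+1)\Sigma_m$ already dominates $(p^2-p){\rm rk}(B'')$. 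A rough count shows that the remaining constraints alone only yield something like $r'\gtrsim\tfrac{3}{2}(p-1)$ when $\Sigma_m\le 1$, which is not enough to make $(p+1)r'\ge 2(p^2-p)$. So to complete your argument you must import the symmetry constraint and prove the dichotomy "$\Sigma_m\ge 2$ or $r'\ge 2p-3$" in general, which is exactly the paper's proof.
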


\begin{proof} Since $B'\neq 0$ with ${\rm rk}(B')<{\rm rk}(B^1_X)$, by \eqref{3.1}, there is a
subsheaf $\sE\subset F_*\sO_X$ with ${\rm rk}(\sE)<p^2$ satisfying
the exact sequence \ga{3.9} {0\to\sO_X\to\sE\to B'\to 0.} The
canonical filtration $0\subset V_{2p-2}\subset\cdots\subset
V_1\subset V_0=F^*F_*\sO_X$ induces
$$0\subset V_m\cap F^*\sE\subset \cdots\subset V_1\cap F^*\sE\subset
V_0\cap F^*\sE=F^*\sE$$ where $m$ is the maximal number such that
$V_m\cap F^*\sE\neq 0$. Let
$$\sF_{\ell}:=\frac{V_{\ell}\cap F^*\sE}{V_{\ell+1}\cap
F^*\sE}\subset\frac{V_{\ell}}{V_{\ell+1}}, \qquad r_{\ell}={\rm
rk}(\sF_{\ell}).$$ When ${\rm dim}(X)=2$, we have (cf. Proposition
3.5 of \cite{Su})
$${\rm T}^{\ell}(\Omega^1_X)=\left\{
\begin{array}{llll}{\rm Sym}^{\ell}(\Omega^1_X)  &\mbox{when $\ell<p$}\\
{\rm Sym}^{2(p-1)-\ell}(\Omega^1_X)\otimes\omega_X^{\ell-(p-1)}
&\mbox{when $\ell\ge p$}
\end{array}\right.$$
where $\omega_X=\sO_X(K_X)$ is the canonical line bundle of $X$.
Thus $V_{\ell}/V_{\ell+1}$ are semi-stable whenever $\Omega^1_X$ is
semi-stable. Then, by Lemma \ref{lem2.1},  \ga{3.10}
{\mu(\sE)-\mu(F_*\sO_X)\le - \frac{\mu(\Omega^1_X)}{p\cdot{\rm
rk}(\sE)}\sum^m_{\ell=0}(p-1-\ell)r_{\ell}} where, by Lemma
\ref{2.2}, we have $m<2p-2$ since ${\rm rk}(\sE)<p^2$. Notice that
$r_0=1$, then \eqref{3.10} implies \ga{3.11} {\mu(B')-\mu(B_X^1)\le
-\frac{\mu(\Omega^1_X)}{p\,{\rm
rk}(B')}\left(\sum^m_{\ell=1}(p-1-\ell)r_{\ell}+\frac{{\rm
rk}(B')}{p+1}\right).} By \eqref{3.1}, \eqref{3.3} and \eqref{3.8},
we have
$\mu(B^1_X)=\mu(Z^1_X)-\frac{2}{(p+1)(p^2+1)}\mu(\Omega^1_X)$,
\ga{3.12} {\mu(B)=\frac{{\rm rk}(B')}{{\rm rk}(B)}\mu(B')+\frac{{\rm
rk}(B'')}{{\rm rk}(B)}\mu(B'').} Substitute \eqref{3.11} and
$\mu(B'')\le\mu(\Omega^1_X)$ to \eqref{3.12}, we have
$$\aligned \mu(B)&\le \frac{{\rm rk}(B')}{{\rm
rk}(B)}\left(\mu(B_X^1)-\frac{\mu(\Omega^1_X)}{p\,{\rm
rk}(B')}(\Sigma_m+\frac{{\rm rk}(B')}{p+1})\right)+\frac{{\rm
rk}(B'')}{{\rm
rk}(B)}\mu(\Omega^1_X)\\
&=\frac{{\rm rk}(B')}{{\rm
rk}(B)}\mu(B_X^1)-\frac{\mu(\Omega^1_X)}{p\,\,{\rm
rk}(B)}\Sigma_m+\left({\rm rk}(B'')-\frac{{\rm
rk}(B')}{p(p+1)}\right)\frac{\mu(\Omega^1_X)}{{\rm rk}(B)}\\
&=\mu(Z_X^1)-\frac{\mu(\Omega^1_X)}{p(p^2+1){\rm
rk}(B)}\left\{\aligned&(p^2+1)\Sigma_m+(p+1){\rm
rk}(B')\\&-(p^2-p){\rm rk}(B'')\endaligned\right\}
\endaligned $$
where $\Sigma_m:=\sum^m_{\ell=1}(p-1-\ell)r_{\ell}\,\ge 0$ since
$m<2p-2$\,. Let
$$N:=(p^2+1)\Sigma_m+(p+1){\rm rk}(B')-(p^2-p){\rm
rk}(B'')$$ To prove the lemma, it is enough to prove the claim that
$N>0$ when $p>3$, and $N\ge 0$ when $p=3$. If $\Sigma_m\ge 2$, the
claim is clear since ${\rm rk}(B'')\le{\rm rk}(\Omega^1_X)=2$. Thus
we can assume that $\Sigma_m\le 1$. To prove the claim, we also
remark that $m\ge 1$ since $r_0=1$ and \ga{3.13}
{\sum^m_{\ell=0}r_{\ell}={\rm rk}(\sE)={\rm rk}(B')+1\ge 2\,.}

If $m\le p-1$, then $\Sigma_m\ge (p-2)r_1$. The condition
$\Sigma_m\le 1$ implies that $p=3$ and $\Sigma_m=1$. Thus
$N=10+4\,{\rm rk}(B')-6\,{\rm rk}(B'')>0$.

If $m>p-1$, we show firstly that the condition $\Sigma_m\le 1$
implies $m+1>2p-3$. In fact, it is clear when $p=3$. To show it for
the case $p>3$, by the formula \eqref{2.7}, we can write
$$\Sigma_m=\sum^{2p-3}_{\ell=m+1} (\ell-p+1)
r_{2p-2-\ell}+\sum^m_{\ell\,>p-1}(\ell-p+1)
(r_{2p-2-\ell}-r_{\ell})$$ where $r_{2p-2-\ell}\ge r_{\ell}$ (
$\ell>p-1$). Thus, if $m+1\le 2p-3$, we have the following
contradiction
$$\Sigma_m\ge (m-p)r_{2p-2-(m+1)}>r_{2p-2-(m+1)}\ge 1$$
where we remark that all of $r_{\ell}={\rm rk}(\sF_{\ell})$
($0\le\ell\le m$) are non-zero since the existence of injections
$\sF_{\ell}\to\sF_{\ell-1}\otimes\Omega^1_X$ (cf. Lemma
\ref{lem2.1}) and $r_m\neq 0$ (by definition). Then the fact that
$m+1>2p-3$ implies
$${\rm rk}(B')=\sum^m_{\ell=1}r_{\ell}\ge m\ge 2p-3.$$
Thus we have $$\aligned N&\ge (p+1)(2p-3)-(p^2-p){\rm
rk}(B'')\\&=(2-{\rm rk}(B''))p^2+({\rm rk}(B'')-1)p-3\endaligned$$
which is positive when $p>3$, and non-negative when $p=3$.
\end{proof}

\begin{lem}\label{lem3.4} If $B'=0$ and $p>2$, then we have
$$\mu(B)-\mu(Z^1_X)<0.$$
\end{lem}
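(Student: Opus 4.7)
The plan is to exploit $B'=0$ to realize $B$ as a subsheaf of $F_*\sL$ for a suitable subsheaf $\sL\subset\Omega^1_X$, and then apply either Theorem \ref{thm2.3} or Lemma \ref{lem2.1}. Since $B\cap B^1_X=0$, the Cartier sequence \eqref{3.3} exhibits $B$ as a subsheaf of $\Omega^1_X$, so ${\rm rk}(B)\le 2$. We then form the adjoint $F^*B\to\Omega^1_X$, namely the restriction of the counit $V_0=F^*F_*\Omega^1_X\twoheadrightarrow V_0/V_1=\Omega^1_X$ to $F^*B$, and let $\sL\subset\Omega^1_X$ be the saturation of its image; by adjunction $B\subset F_*\sL$, with ${\rm rk}(\sL)\in\{1,2\}$.

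If ${\rm rk}(\sL)=1$, we apply Theorem \ref{thm2.3} to $B\subset F_*\sL$. Combined with $\mu(F_*\sL)=\frac{(p-1)\mu(\Omega^1_X)+\mu(\sL)}{p}$ from \eqref{2.4} and $\mu(\sL)\le\mu(\Omega^1_X)$ from semi-stability of $\Omega^1_X$, this yields
$$\mu(B)\le\frac{(p-1)({\rm rk}(B)-1)}{p\cdot{\rm rk}(B)}\mu(\Omega^1_X)+\frac{\mu(\Omega^1_X)}{p}\le\frac{p+1}{2p}\mu(\Omega^1_X).$$
If ${\rm rk}(\sL)=2$, then $\sL=\Omega^1_X$ (being saturated of full rank in $\Omega^1_X$) and necessarily ${\rm rk}(B)=2$; the induced filtration on $F^*B\subset F^*F_*\Omega^1_X$ then has $r_0=2$ and $r_\ell=0$ for $\ell\ge 1$, so only the semi-stability of $V_0/V_1=\Omega^1_X$ is needed, and Lemma \ref{lem2.1} directly gives $\mu(B)\le\mu(\Omega^1_X)/p$.

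Comparing with $\mu(Z^1_X)=\frac{p^2-p+2}{p^2+1}\mu(\Omega^1_X)$ (computed from \eqref{2.4}, \eqref{3.1}, and \eqref{3.3}), the inequalities $\frac{p+1}{2p}<\frac{p^2-p+2}{p^2+1}$ and $\frac{1}{p}<\frac{p^2-p+2}{p^2+1}$ reduce respectively to $(p-1)^3>0$ and $p^3-2p^2+2p-1>0$, both valid for $p>2$. The main subtlety we expect lies in identifying $\sL$ and verifying $B\subset F_*\sL$, a routine but delicate adjunction check; splitting into the two subcases is crucial to avoid any appeal to the semi-stability of $\Omega^1_X\otimes T^\ell(\Omega^1_X)$ for $\ell\ge 1$, which is not part of our hypotheses.
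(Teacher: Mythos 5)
Your proof is correct, but it takes a genuinely different route from the paper's in the decisive subcase. The paper keeps $B$ inside $F_*\Omega^1_X$ and splits according to whether $V_1\cap F^*B$ vanishes: when it does not, ${\rm rk}(B)=2$ and the two rank-one pieces $\sF_0\subset V_0/V_1=\Omega^1_X$ and $\sF_1\subset V_1/V_2=\Omega^1_X\otimes\Omega^1_X$ are estimated using the semistability of $\Omega^1_X\otimes\Omega^1_X$, which is supplied by the Ilangovan--Mehta--Parameswaran tensor-product theorem (applicable since $2+2\le p+1$ for $p\ge 3$) and yields $\mu(B)\le\frac{3}{2p}\mu(\Omega^1_X)$. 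You instead push $B$ down into $F_*\sL$, where $\sL\subset\Omega^1_X$ is the saturated image of the adjoint map $F^*B\to\Omega^1_X$; your case ${\rm rk}(\sL)=2$ coincides with the paper's case $V_1\cap F^*B=0$, while your case ${\rm rk}(\sL)=1$ replaces the tensor-product theorem by Theorem \ref{thm2.3} applied to $B\subset F_*\sL$, whose graded pieces $\sL\otimes{\rm T}^{\ell}(\Omega^1_X)$ are semistable for rank-one $\sL$ by the same computation as in Corollary \ref{cor2.5}. Your resulting bound $\frac{p+1}{2p}\mu(\Omega^1_X)$ is weaker than the paper's $\frac{3}{2p}\mu(\Omega^1_X)$ for $p\ge 2$, but it still beats $\mu(Z^1_X)=\frac{p^2-p+2}{p^2+1}\mu(\Omega^1_X)$ since the difference reduces to $(p-1)^3>0$, and your numerics check out. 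What your approach buys is independence from the tensor-product semistability theorem (and, incidentally, both of your final inequalities also hold at $p=2$, though Theorem \ref{thm3.2} does not need this); what it costs is a slightly worse constant and the extra adjunction step, in which you should say explicitly that $\sL\neq 0$ because the adjoint of the nonzero inclusion $B\hookrightarrow F_*\Omega^1_X$ is nonzero, so that ${\rm rk}(\sL)\in\{1,2\}$ is justified.
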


\begin{proof} When $B'=0$, $B=B''$ has rank at most $2$. By
\eqref{3.2}, we consider $B\subset F_*\Omega^1_X$ and the canonical
filtration $$0=V_{2p+1}\subset V_{2p-2}\subset
\,\,\cdots\,\,V_1\subset V_0=F^*F_*\Omega^1_X\,.$$ If $V_1\cap
F^*B=0$, then $F^*B\subset V_0/V_1=\Omega^1_X$. By
$\mu(F^*B)\le\mu(\Omega^1_X)$ and
$$\mu(Z_X^1)=\frac{p^2-p+2}{p^2+1}\mu(\Omega^1_X)$$ we have
$$\mu(B)-\mu(Z_X^1)\le
-\frac{(p^2-p+1)(p-1)}{p(p^2+1)}\mu(\Omega^1_X).$$

If $V_1\cap F^*B\neq 0$, then ${\rm rk}(B)=2$, $V_2\cap F^*B=0$ and
$$\sF_1:=\frac{V_1\cap F^*B}{V_2\cap F^*B}\subset
\frac{V_1}{V_2}\,,\quad \sF_0:=\frac{V_0\cap F^*B}{V_1\cap
F^*B}\subset \frac{V_0}{V_1}$$ are subsheaves of rank $1$. On the
other hand, by a theorem of Ilangovan-Mehta-Parameswaran (cf.
Section 6 of \cite{L} for the precise statement): If $E_1$, $E_2$
are semi-stable bundles with ${\rm rk}(E_1)+{\rm rk}(E_2)\le p+1$,
then $E_1\otimes E_2$ is semi-stable. We see that
$$V_1/V_2=\Omega_X^1\otimes\Omega_X^1\,,\quad V_0/V_1=\Omega^1_X$$
are semi-stable since $p>2$. Thus \ga{3.14}
{\mu(B)=\frac{\mu(\sF_0)+\mu(\sF_1)}{2p}\le
\frac{\mu(\Omega_X^1)+\mu(\Omega^1_X\otimes\Omega^1_X)}{2p}} which
implies that
$$\aligned\mu(B)-\mu(Z_X^1)&=\mu(B)-\frac{p^2-p+2}{p^2+1}\mu(\Omega^1_X)\\
&\le\frac{3\,\mu(\Omega_X^1)}{2p}-\frac{p^2-p+2}{p^2+1}\mu(\Omega^1_X)\\
&=-\frac{p^2(2p-5)+4p-3}{2p(p^2+1)}\mu(\Omega^1_X).
\endaligned$$

\end{proof}

\bibliographystyle{plain}

\renewcommand\refname{References}

\end{document}